\numberwithin{equation}{section}
\theoremstyle{plain}
 \newtheorem{theorem}{Theorem}[section]
 \newtheorem{lemma}[theorem]{Lemma}
 \newtheorem{corollary}[theorem]{Corollary}
\theoremstyle{definition}
 \newtheorem{example}[theorem]{Example}
\renewcommand \epsilon {{\boldsymbol\varepsilon}}
\newcommand \trap {D}
\newcommand \goodset {H}
\newcommand \trngle {T}
\newcommand \circel[2] {\textup{Circ}(#1,#2)}
\newcommand \disk[2] {\textup{Disk}(#1,#2)}
\newcommand \dist[2]{\textup{dist}(#1,#2)}
\newcommand \ctre[1] {#1^{\mathord{\bullet}}}
\newcommand \ulambda{\underline\lambda}
\newcommand \eeqref[1] { \overset{\textup{\eqref{#1}}}=  }
\newcommand \ssty[1]{\scriptscriptstyle{#1}}
\newcommand \hconc[2] {\boldsymbol\chi_{#1,  \ctre{#2}}}
\newcommand \hthet[2] {\boldsymbol\chi_{#1,#2}}
\newcommand \loosin {\overset{\ssty{\textup{loose}}}\subset}
\newcommand \reangle [2]{{\textup{Ang}(#1,#2)}}
\newcommand \rhullo [1] {\textup{Conv}_{\real^{#1}}} 
\newcommand\set [1]{\{#1\}}
\newcommand \tuple [1] {\langle #1 \rangle}
\newcommand \pair [2] {\tuple{#1,#2}}
\newcommand \real {\mathbb R}
\newcommand \preal {\real^{2}}
\newcommand \nothing [1] {}
\newcommand \red [1] {{\color{red}#1\color{black}}}
\newcommand \tbf[1] {\textbf{#1}}  
\begin{document}
\title[Convexity and circles]
{An easy way to a theorem of Kira Adaricheva and Madina Bolat on convexity and circles}

\author[G.\ Cz\'edli]{G\'abor Cz\'edli}
\email{czedli@math.u-szeged.hu}
\urladdr{http://www.math.u-szeged.hu/\textasciitilde{}czedli/}
\address{University of Szeged\\ Bolyai Institute\\Szeged,
Aradi v\'ertan\'uk tere 1\\ Hungary 6720}

\thanks{This research was supported by
NFSR of Hungary (OTKA), grant number K 115518}

\begin{abstract} 
Kira Adaricheva and Madina Bolat have recently proved that if $U_0$ and $U_1$ are circles in a triangle with vertices $A_0,A_1,A_2$, then there exist $j\in \set{0,1,2}$ and $k\in\set{0,1}$ such that $U_{1-k}$ is included in the convex hull of $U_k\cup(\set{A_0,A_1, A_2}\setminus\set{A_j})$. We give a short new proof for this result,  and we point out that a straightforward generalization for spheres fails.
 
\end{abstract}

\subjclass {Primary 52C99, secondary 52A01}

\dedicatory{Dedicated to the eighty-fifth birthday of B\'ela Cs\'ak\'any}

\keywords{Convex hull, circle, sphere, abstract convex geometry, anti-exchange system, Carath\'eodory's theorem, carousel rule}

\date{\red{May 17, 2017, extended version}}

\maketitle

\section{Aim and introduction}
\subsection*{Our goal}
The real $n$-dimensional space and the usual convex hull operator on it will be denoted by $\real ^n$ and $\rhullo n$. That is, for a set $X\subseteq \real^n$ of points,  $\rhullo n(X)$ is the smallest convex subset of $\real^n$ that includes $X$. In this paper, the Euclidean distance $(\sum_{i=1}^n (X_i-Y_i)^2)^{1/2}$ of $X,Y\in\real^n$ is denoted by $\dist X Y$. 
For $P\in \real ^2$ and $0\leq r\in \real$, the \emph{circle}
of center $P$ and radius $r$ will be denoted by
\begin{equation*}
\circel P r:=\set{X\in\real^2:  \dist P X = r}. 
\end{equation*}
Our aim is to give a new proof of the following theorem. Our approach is entirely different from and shorter than the original one given by  Adaricheva and Bolat~\cite{kaczg}. Roughly saying, the novelty is that instead of dealing with several cases, we prove that the ``supremum of good cases" implies the result for all cases.

\begin{theorem}[{Adaricheva and Bolat~\cite[Theorem 3.1]{kabolat}}]\label{thmmain} Let  $A_0,A_1,A_2$ be points in the plane. If $U_0$ and $U_1$ are circles such that $U_i\subseteq\rhullo 2(\set{A_0,A_1,A_2})$ for $i\in\set{0,1}$, then there exist subscripts $j\in \set{0,1,2}$ and $k\in\set{0,1}$ such that 
\begin{equation}
U_{1-k} \subseteq \rhullo 2\bigl( U_k\cup(\set{A_0,A_1, A_2}\setminus\set{A_j})\bigr).
\label{eqthmmain}
\end{equation}
\end{theorem}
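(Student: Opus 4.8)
My plan is to recast the containment in \eqref{eqthmmain} as an inequality between support functions and thereby reduce the whole theorem to a single covering statement for arcs of unit vectors. Write $O_i$ and $r_i$ for the center and radius of $U_i$, and recall that the support function $h_S(u)=\sup_{X\in S}\langle X,u\rangle$ satisfies $h_S=h_{\rhullo 2(S)}$ and that $K\subseteq\rhullo 2(S)$ holds iff $h_{\rhullo 2(K)}\le h_S$ pointwise. The closed disk $\rhullo 2(U_i)$ has support function $h_i(u)=\langle O_i,u\rangle+r_i$, and $h_{\set{A_0,A_1,A_2}}(u)=\max_m\langle A_m,u\rangle$. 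Thus the hypothesis $U_i\subseteq\rhullo 2(\set{A_0,A_1,A_2})$ is precisely $h_i\le\max_m\langle A_m,\cdot\rangle$, while \eqref{eqthmmain} for fixed $j,k$ is equivalent to
\begin{equation*}
h_{1-k}(u)\le\max\bigl(h_k(u),\,\langle A_a,u\rangle,\,\langle A_b,u\rangle\bigr)\quad\text{for every unit vector }u,
\end{equation*}
where $\set{a,b}=\set{0,1,2}\setminus\set{j}$.

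This inequality is automatic off the arc $P_k=\set{u:h_{1-k}(u)>h_k(u)}$ of directions in which the disk $U_{1-k}$ protrudes beyond $U_k$, since there the $h_k$ term is dominated. For a vertex let $S^{(k)}_m=\set{u:\langle A_m,u\rangle\ge h_{1-k}(u)}$ be the arc of outer normals $u$ for which the supporting line of $U_{1-k}$ in direction $u$ leaves $A_m$ on its far side; as $A_m$ lies outside $U_{1-k}$, this is an arc of angular radius $\arccos\bigl(r_{1-k}/\dist{A_m}{O_{1-k}}\bigr)$ about the direction $\widehat{O_{1-k}A_m}$. A direct check then shows that \eqref{eqthmmain} holds for $j,k$ exactly when $P_k\subseteq S^{(k)}_a\cup S^{(k)}_b$. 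Hence the theorem reduces to the single claim: for $k=0$ or $k=1$, the protrusion arc $P_k$ is covered by two of the three separating arcs $S^{(k)}_0,S^{(k)}_1,S^{(k)}_2$.

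To exploit this I would take $k$ to be the index of the larger circle, say $r_0\le r_1$ and $k=1$, so that $P_1=\set{u:\langle O_0-O_1,u\rangle>r_1-r_0}$ is an arc of angular radius at most $90^\circ$ about $\widehat{O_1O_0}$, the direction from the larger center towards the smaller one. The vertex I would drop is $A_r$, the one opposite the side through which the ray from $O_1$ through $O_0$ leaves the triangle; the exit point lies on that side, so $O_0=\mu O_1+\nu_pA_p+\nu_qA_q$ for the two kept vertices $A_p,A_q$, and the center of $P_1$ falls between the directions $\widehat{O_0A_p}$ and $\widehat{O_0A_q}$. One first observes that consecutive separating arcs cover the gap between their centers (a direction inside the gap points towards the side $A_pA_q$, whereas a direction leaving both $A_p,A_q$ unseparated must point towards $A_r$, i.e.\ outside the gap), so $S^{(1)}_p\cup S^{(1)}_q$ is a single arc straddling the center of $P_1$; the task is to show that $P_1$, being centered in this gap, stays inside it.

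The step I expect to be the genuine obstacle is exactly this last covering estimate, which is best possible: it already becomes tight in several boundary configurations (for instance when $U_0$ is internally tangent to the kept side near a corner, so that a tangent line from $A_p$ or $A_q$ to $U_1$ just grazes $U_0$), and a naive position analysis splinters into many cases. Rather than doing that, I would argue in the spirit of ``the supremum of good cases''. Strict containment is an open condition on the data $(O_0,O_1,r_0,r_1)$ and holds on a dense family (e.g.\ for all sufficiently small homothetic shrinkings $U_0^{(s)}$ of $U_0$ about its center); containment itself is the corresponding closed condition. I would therefore pass to the supremal good configuration, where $U_0^{(s)}$ becomes tangent to the boundary of $\rhullo 2(U_1\cup\set{A_p,A_q})$, and show that this limiting tangency is forced precisely by the hypotheses $U_0,U_1\subseteq\rhullo 2(\set{A_0,A_1,A_2})$, so that the closed containment survives at and beyond $s=1$. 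The remaining degenerate cases are harmless and in fact easier: if one disk contains the other then $P_k=\emptyset$; if $A_0,A_1,A_2$ are collinear the circles degenerate to points; and a circle through a vertex merely shrinks one separating arc to a single ray, which the continuity argument already absorbs.
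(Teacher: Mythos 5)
Your support-function reformulation is sound and is a genuinely different packaging from the paper's argument (the paper never uses support functions): the equivalence of \eqref{eqthmmain} for a fixed pair $(j,k)$ with the arc covering $P_k\subseteq S^{(k)}_a\cup S^{(k)}_b$, the fact that $S^{(k)}_0\cup S^{(k)}_1\cup S^{(k)}_2$ is the whole circle of directions (because the tangent point $O_{1-k}+r_{1-k}u$ lies in the triangle), and the gap-covering observation are all correct. But, as you yourself say, this only relocates the theorem: it \emph{is} the covering estimate, and for that estimate you offer a limiting argument that cannot work as set up.

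The gap is that your deformation is vacuous. You shrink only $U_0$, about its own center $O_0$, and keep the pair $(k,j)$ fixed by your recipe. Since $O_0$ is interior to any convex set containing the circle $U_0$, the containment $U_0\subseteq K:=\rhullo 2\bigl(U_1\cup\set{A_p,A_q}\bigr)$ is \emph{equivalent} to the containment of $U_0^{(s)}$ in $K$ for all $s\le 1$ (one direction by convexity of $K$, the other by closedness of $K$). So ``the supremal good parameter is at least $1$'' is not a weaker statement you can bootstrap from; it is literally the claim to be proven, and passing to the supremum gains nothing. Concretely, if $s^*<1$, then $U_0^{(s^*)}$ touches the boundary of $K$ either on the arc of $U_1$ or on a tangent segment from $A_p$ or $A_q$ (touching the side $[A_p,A_q]$ is excluded since the distance from $O_0$ to that line is at least $r_0>s^*r_0$), and in either case $U_0^{(s)}$ for $s>s^*$ immediately pokes out of $K$ near the touching point: for the fixed pair there is no mechanism to push past $s^*$, and your sentence ``show that this limiting tangency is forced precisely by the hypotheses, so that the closed containment survives at and beyond $s=1$'' merely restates the goal. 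Contrast this with the paper, where \emph{both} circles are shrunk simultaneously by the same factor $\xi$ and the witnessing pair $(k,j)$ is allowed to \emph{change} with the parameter; that flexibility is exactly what defeats the critical configuration. There, tangency of $U_1(\xi)$ to the back arc is impossible by Lemma~\ref{lemmaintrtgntrC} (blowing up \emph{both} circles from their own centers keeps one inside the other --- false when only one circle is rescaled, which is your situation), and tangency to a leg is resolved by switching pairs: in one case the roles of $U_0$ and $U_1$ are exchanged, in the other the dropped vertex changes, via the auxiliary circle $C$ and Lemmas~\ref{lemmaperspCs} and~\ref{lemmaBlowUp}. Without this flexibility --- or without a direct proof of your covering estimate, which you explicitly defer --- the proposal does not prove the theorem. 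Note also that your recipe itself (keep the larger circle; drop the vertex opposite the side through which the ray from $O_1$ through $O_0$ exits) is plausible but nowhere proven, and proving it correct is essentially the whole content of the theorem.
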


Notably enough, Adaricheva and Bolat
~\cite[Theorem 5.1]{kabolat} states even more than \cite[Theorem 3.1]{kabolat}; we formulate their more general result as follows.

\begin{corollary}[{Adaricheva and Bolat~\cite[Theorem 5.1]{kabolat}}]\label{corolmain}
If $C_0$, $C_1$, $C_2$, $U_0$, and $U_1$ are circles in the plane such that $U_i\subseteq\rhullo 2(C_0\cup C_1\cup C_2)$ for $i\in\set{0,1}$, then $U_{1-k} \subseteq \rhullo 2\bigl( U_k\cup\bigcup(\set{C_0,C_1, C_2}\setminus\set{C_j})\bigr)$ holds for some $j\in \set{0,1,2}$ and $k\in\set{0,1}$.
\end{corollary}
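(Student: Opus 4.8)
The plan is to translate the whole problem into the language of support functions, where both Theorem~\ref{thmmain} and Corollary~\ref{corolmain} become elementary pointwise inequalities. For a convex body $S\subseteq\preal$ and a unit vector $\theta=(\cos\alpha,\sin\alpha)$, write $h_S(\theta)=\sup_{X\in S}\langle X,\theta\rangle$ for its support function. I would rely on three standard facts: a circle $V$ lies in a convex set $K$ exactly when the closed disk bounded by $V$ does, and for convex bodies $A\subseteq B$ holds iff $h_A\le h_B$ pointwise on the unit circle; the support function of a convex hull is the pointwise maximum, $h_{\rhullo 2(\bigcup_m W_m)}=\max_m h_{W_m}$; and the disk bounded by $\circel P\rho$ has support function $h(\theta)=\langle P,\theta\rangle+\rho$. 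Thus every circle contributes one \emph{affine--trigonometric} function $f_{\circel P\rho}(\alpha)=\langle P,\theta\rangle+\rho$, i.e.\ a function of the shape $p\cos\alpha+q\sin\alpha+\rho$.

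Writing $C_i=\circel{A_i}{r_i}$ and $U_i=\circel{Q_i}{s_i}$, the hypothesis $U_i\subseteq\rhullo 2(C_0\cup C_1\cup C_2)$ becomes, for $i\in\set{0,1}$ and every $\alpha$,
\begin{equation*}
\langle Q_i,\theta\rangle+s_i\le\max\bigl(\langle A_0,\theta\rangle+r_0,\ \langle A_1,\theta\rangle+r_1,\ \langle A_2,\theta\rangle+r_2\bigr),
\end{equation*}
while the desired conclusion, for the appropriate $j$ and $k$ with $\set{i_1,i_2}=\set{0,1,2}\setminus\set j$, becomes
\begin{equation*}
\langle Q_{1-k},\theta\rangle+s_{1-k}\le\max\bigl(\langle Q_k,\theta\rangle+s_k,\ \langle A_{i_1},\theta\rangle+r_{i_1},\ \langle A_{i_2},\theta\rangle+r_{i_2}\bigr).
\end{equation*}
In this formulation Theorem~\ref{thmmain} is precisely the special case in which the three vertex constants vanish, $r_0=r_1=r_2=0$ (a point being a degenerate circle). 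So Corollary~\ref{corolmain} is the same statement with an extra additive constant $r_i\ge0$ attached to each outer function, and the task is to show that these constants can be carried through to the conclusion.

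The tempting shortcut---replace each $C_i$ by a point $A_i'$ in its disk and apply Theorem~\ref{thmmain} to the triangle $A_0'A_1'A_2'$---fails, and recording why is the heart of the matter: one cannot in general inscribe a triangle with one vertex in each disk that still contains $U_0\cup U_1$, because if some $U_i$ is internally tangent to the hull along a circular arc of some $C_j$, then every straight edge of an inscribed triangle cuts strictly inside that arc. Equivalently, no affine map flattens the three radii to $0$, so an argument carrying the constants is genuinely needed. The route I would take is the \emph{carousel rule}: as $\alpha$ runs through $[0,2\pi)$, the index $j$ for which $\langle A_j,\theta\rangle+r_j$ realizes the outer maximum sweeps through $\set{0,1,2}$ in one fixed cyclic order, cutting the direction circle into at most three consecutive arcs $I_0,I_1,I_2$ (if some $C_j$ contributes no boundary arc it is redundant, and dropping it makes the conclusion immediate for either $k$). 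Off $I_j$ the kept functions $c_{i_1},c_{i_2}$ already dominate, so the pair $(j,k)$ can fail only on $I_j$, and there only where $u_{1-k}$ exceeds all of $u_k$, $c_{i_1}$, $c_{i_2}$ at once. I would then run the paper's \emph{supremum of good cases} argument on this three-arc decomposition, choosing the cyclic position $j$ and the index $k$ extremally.

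The hard part will be the final covering step: showing that the ``bad arcs'' attached to the six candidate pairs $(j,k)$ cannot all be nonempty simultaneously, so that at least one pair is good. I would isolate this as a self-contained lemma about three affine--trigonometric functions together with two further ones, phrased so that the constants $r_i$ and $s_i$ enter only as harmless vertical shifts and a Carath\'eodory-type count controls the combinatorics; the degenerate subcases (some $r_i=0$, or a small inner radius) only shrink the relevant disks and weaken the inequalities, hence cause no trouble. Once that lemma is in place, the passage from Theorem~\ref{thmmain} to Corollary~\ref{corolmain} is immediate.
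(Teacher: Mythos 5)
Your support-function dictionary is correct: the hypothesis does become $u_i\le\max(c_0,c_1,c_2)$ pointwise on the direction circle, the conclusion becomes $u_{1-k}\le\max(u_k,c_{i_1},c_{i_2})$, and Theorem~\ref{thmmain} is exactly the case $r_0=r_1=r_2=0$. But your one concrete structural claim, the ``carousel'' assertion that the winning index of $\max(c_0,c_1,c_2)$ sweeps through $\set{0,1,2}$ in a fixed cyclic order, cutting the direction circle into at most three consecutive arcs, is false once the radii are unequal. Take $C_0=\circel{(0,0)}{10}$, $C_1=\circel{(8.5,5)}{1}$, $C_2=\circel{(8.5,-5)}{1}$: at $\alpha=0$ the winner is $c_0$ ($10$ versus $9.5$), near $\alpha\approx 30.5^\circ$ it is $c_1$ (since $\sqrt{97.25}+1\approx 10.86>10$), near $\alpha\approx-30.5^\circ$ it is $c_2$, and on the whole far side it is $c_0$ again; the cyclic pattern of winners is $0,1,0,2$, so the dominance region of $C_0$ is disconnected, and no circle is redundant, so your parenthetical escape clause does not apply. (Each pair of these functions crosses at most twice, which permits up to six arcs on the envelope, not three.) The only part of that paragraph that survives is the trivial one: the pair $(j,k)$ can fail only where $c_j$ is the strict maximum; the ``cyclic position'' on which you intended to base the choice of $j$ does not exist.

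The more serious problem is that nothing is actually proved. The ``self-contained lemma'' to which you defer the hard part --- that the six bad sets attached to the pairs $(j,k)$ cannot all be nonempty --- is not a statement you reduce the corollary to; it \emph{is} the corollary, restated in support-function language, so the claim that afterwards ``the passage from Theorem~\ref{thmmain} to Corollary~\ref{corolmain} is immediate'' is circular. Nor is it explained how the paper's supremum-of-good-cases argument would run here: that proof leans on the tangent lines through the point vertices $A_j$ and on homotheties centered at them (Lemma~\ref{lemmaperspCs}, the auxiliary circle of \eqref{eqtxtsddtGfhNmW}), and replacing point vertices by circles $C_j$ is precisely where new geometry is required. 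For what it is worth, the paper you are being compared against contains no proof of this corollary either; it quotes it from Adaricheva and Bolat, and the natural derivation from Theorem~\ref{thmmain} goes the opposite way from the shortcut you rightly discard: one \emph{circumscribes} a triangle using the three common external tangent lines (vertex $A_j$ supporting $C_j$), applies Theorem~\ref{thmmain} to it, and then argues --- this needs its own lemma --- that inside $\rhullo 2(C_0\cup C_1\cup C_2)$ each kept vertex $A_{i}$ can be traded back for the circle $C_{i}$ it supports. Either that replacement step or an honest proof of your covering lemma is what is missing; as written, your text is a plan for a proof, not a proof.
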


Note that Adaricheva and Bolat~\cite{kabolat} call the property stated in this corollary for circles the ``Weak Carousel property''. 
Note also that \cite{kabolat} gives a new justification to Cz\'edli and Kincses~\cite{czgkj}, because Theorem 5.2 and Section 6 in \cite{kabolat} yield that
the almost-circles in \cite{czgkj} cannot be replaced by circles. Also,  \cite{kabolat} motivates 
Cz\'edli~\cite{czgCharc}   and Kincses~\cite{Kincs}.
This paper is self-contained. 
For more about the background of this topic, the reader may want, but need not, to see, for example, Adaricheva and Nation~\cite {kirajbbooksection} and \cite{kajbn}, Cz\'edli~\cite{czgcircles}, Edelman and Jamison~\cite{edeljam}, Kashiwabara,  Nakamura, and  Okamoto~\cite{kashiwabaraatalshelling}, Monjardet~\cite{monjardet}, and   Richter and Rogers~\cite{richterrogers}.

The results of Adaricheva and Bolat~\cite{kabolat}, that is, Theorem~\ref{thmmain} and Corollary~\ref{corolmain} above, and our easy approach raise the question whether the most straightforward generalizations hold for $3$-dimensional spheres.
In Section~\ref{sectexampl}, which is a by-product of our method in some implicit sense,  we give a negative  answer.

\section{Homotheties and round-edged angles}

\subsection{A single circle}
For $0<r\in\real$ and $F,P\in\real^2$ with  $\dist F P>r$, let 
\begin{equation}
\reangle F{\circel P r}\text{ be the  grey-filled area in Figure~\ref{figegy};}
\label{eqcrlDsKc}
\end{equation}
it is called the \emph{round-edged angle} determined by its \emph{focus} $F$ and \emph{spanning circle} $\circel P r$. Note that $\reangle F{\circel P r}$ is not bounded from the right and $F$ is outside both $\circel P r$ and $\reangle F{\disk P r}$. 
Note that  $\reangle F{\disk P r}$ includes its boundary, which consists of a circular arc called the \emph{front arc} and two half-lines.

\begin{figure}[ht] 
\centerline
{\includegraphics[scale=1.0]{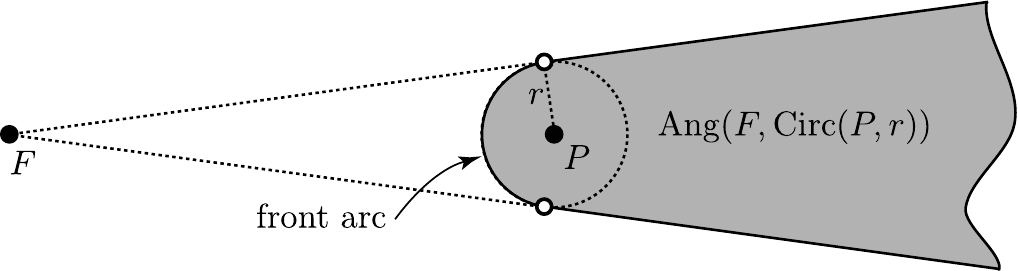}}
\caption{Round-edged angle
\label{figegy}}
\end{figure}%

\subsection{Externally perspective circles}
First,  recall or define some easy concepts and notations.  
For topologically closed convex sets $W_1,W_2\subseteq \preal$, we will say that 
\begin{equation}
\text{$W_1$ is \emph{loosely included} in $W_2$, in notation, $W_1\loosin W_2$,}
\label{eqlooSinCl}
\end{equation}
if every point of $W_1$ is an internal point of $W_2$.
Given $P\in\preal$ and $0\neq\lambda\in\real$,  the \emph{homothety} with (homothetic) center $P$ and ratio $\lambda$ is defined by  
\begin{equation}
\text{
$\hthet P\lambda \colon \real^2\to \real^2$ by  $X \mapsto PX\ulambda:=(1-\lambda)P+\lambda X$.}
\label{eqtxthmThdF}
\end{equation}
We will not need negative ratios $\lambda$ and
we use the Polish notation for the \emph{barycentric operation} $\ulambda$. Homotheties are similarity transformations. In particular, they map the center of a circle to the center of its image.  If $C_1$ and $C_2$ are circles and $C_2=\hthet P\lambda(C_1)$ such that $P$ is (strictly) outside both $C_1$ and $C_2$ (equivalently, if $P$ is outside $C_1$ or $C_2$) and $0<\lambda\in\real$, then $C_1$ and $C_2$ will be called \emph{externally perspective circles}. 
Clearly, if $C_1$ and $C_2$ are of different radii and none of them is inside the other, then $C_1$ and $C_2$ are externally perspective,  $P$ is the intersection point of their external tangent lines, and $\lambda$ is the ratio of their radii.

\begin{lemma}\label{lemmaperspCs}
Let $\circel {P_1}{r_1}$ and $\circel {P_2}{r_2}$ be externally perspective circles in 
the plane with center $F$ of perspectivity such that  $0<r_2<r_1$; see Figure~\ref{figmbAT}. If $G$ is a point on the line segment  $[F,P_2]$ such that  $r_2<\dist{G}{P_2}<\dist{F}{P_2}$,
then   $\reangle F{\circel {P_1}{r_1}}\loosin\reangle G{\circel {P_2}{r_2}}$; see \eqref{eqlooSinCl} and Figure~\ref{figmbAT}.
\end{lemma}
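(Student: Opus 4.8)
The plan is to turn the loose inclusion into a family of strict linear inequalities by describing each round-edged angle as an intersection of tangent half-planes, and then to verify those inequalities by a single cancellation that uses only the external-perspectivity identity. First I would fix coordinates adapted to the configuration. Since $\circel{P_1}{r_1}$ and $\circel{P_2}{r_2}$ are externally perspective with center $F$ and $r_2<r_1$, the homothety $\hthet F\lambda$ with $\lambda=r_2/r_1\in(0,1)$ carries the first circle to the second; hence $P_2=(1-\lambda)F+\lambda P_1$ lies on the segment $[F,P_1]$, the two circles share the same pair of tangent lines through $F$, and
\[
\frac{r_1}{\dist F{P_1}}=\frac{r_2}{\dist F{P_2}}=:\sigma
\]
(the common half-angle $\alpha$ satisfies $\sin\alpha=\sigma$). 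I place $F$ at the origin with the ray $FP_1$ as the positive first axis, so $P_1=(\dist F{P_1},0)$, $P_2=(\dist F{P_2},0)$ with $0<\dist F{P_2}<\dist F{P_1}$, and $G=(g,0)$. The hypothesis $\dist G{P_2}<\dist F{P_2}$ forces $g>0$ (so $G$ is strictly between $F$ and $P_2$), while $\dist G{P_2}>r_2$ guarantees $G$ is outside $\circel{P_2}{r_2}$, so that $\reangle G{\circel{P_2}{r_2}}$ is defined at all.

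The key tool is a half-plane description of a round-edged angle. Writing $e=(1,0)$ for the unit vector pointing from the focus toward the centers, I claim that for a focus $Q=(q,0)$ on the axis with $\dist Q P>r$,
\[
\reangle Q{\circel P r}=\bigcap\bigl\{\,X:\ \langle X-P,\,u\rangle\le r\,\bigr\},
\]
the intersection running over all unit vectors $u$ with $\langle u,e\rangle\le -r/\dist Q P$. I would justify this by identifying the boundary pieces of Figure~\ref{figegy}: the two extreme directions $\langle u,e\rangle=-r/\dist Q P$ give exactly the tangent lines through $Q$ (the two half-lines of the boundary), the directions with $\langle u,e\rangle<-r/\dist Q P$ cut out the front arc, and one checks that every point of $\disk P r$ satisfies all the inequalities whereas points behind $Q$ or outside the cone violate at least one. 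A pleasant feature of this description is that the set of admissible normals $u$ is a closed, hence compact, arc of the unit circle; this is what will let me convert strict inequalities into interior membership.

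With these descriptions in hand the proof reduces to one estimate. Set $m:=\dist F{P_1}-\dist F{P_2}>0$. Let $X\in\reangle F{\circel{P_1}{r_1}}$ and let $u$ be any admissible normal for $\reangle G{\circel{P_2}{r_2}}$, i.e. $\langle u,e\rangle\le -r_2/\dist G{P_2}$. Because $\dist G{P_2}<\dist F{P_2}$ we have $r_2/\dist G{P_2}>r_2/\dist F{P_2}=\sigma$, so $\langle u,e\rangle<-\sigma$ \emph{strictly}; in particular $u$ is also admissible for $\reangle F{\circel{P_1}{r_1}}$, giving $\langle X-P_1,u\rangle\le r_1$. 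Splitting off the translation between the two centers,
\[
\langle X-P_2,u\rangle=\langle X-P_1,u\rangle+m\,\langle u,e\rangle\le r_1+m\,\langle u,e\rangle,
\]
and since $\langle u,e\rangle<-\sigma$ and $m>0$, the right-hand side is strictly smaller than $r_1-\sigma m=r_1-(r_1-r_2)=r_2$. Thus $\langle X-P_2,u\rangle<r_2$ for every admissible $u$, where the strictness is harvested entirely from the single inequality $\dist G{P_2}<\dist F{P_2}$.

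Finally I would upgrade this family of strict inequalities to interior membership. As the admissible normals form a compact arc and $u\mapsto r_2-\langle X-P_2,u\rangle$ is continuous and positive there, it attains a positive minimum $\delta$; any point within distance $\delta$ of $X$ then still satisfies all defining inequalities of $\reangle G{\circel{P_2}{r_2}}$, so $X$ is an interior point. Since $X$ was arbitrary, $\reangle F{\circel{P_1}{r_1}}\loosin\reangle G{\circel{P_2}{r_2}}$, as desired. I expect the one genuinely delicate point to be establishing the half-plane description and checking that it coincides with the grey region of Figure~\ref{figegy}; once that is granted, the perspectivity identity $r_i=\sigma\dist F{P_i}$ makes the central estimate collapse to the clean cancellation above, and the loose (rather than merely ordinary) inclusion comes for free from $g>0$.
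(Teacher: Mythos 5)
Your proof is correct, and it takes a genuinely different route from the paper's. The paper's argument is synthetic and very short: since $\circel{P_1}{r_1}=\hthet F\lambda(\circel{P_2}{r_2})$ with $\lambda=r_1/r_2>1$ and homotheties preserve tangency, $\hthet F\lambda$ maps the front arc $I_2$ of the small circle onto the front arc $I_1$ of the big one, so $I_2$ lies strictly to the left of $I_1$, and the lemma is then read off from Figure~\ref{figmbAT}; in particular, the role of $G$ (replacing the focus $F$ by a strictly closer one, which strictly widens the tangent cone and turns mere inclusion into \emph{loose} inclusion) is left to the picture. You instead dualize: you identify each round-edged angle with the intersection of its supporting tangent half-planes $\set{X:\langle X-P,u\rangle\le r}$ over the compact arc of unit normals $u$ satisfying $\langle u,e\rangle\le -r/\dist Q P$, after which the perspectivity relation $r_i=\sigma\,\dist F{P_i}$ reduces everything to the cancellation $r_1+m\langle u,e\rangle<r_1-\sigma m=r_2$. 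This buys three things the published proof leaves implicit: it is figure-independent (valuable here, since $\reangle F{\circel P r}$ is literally \emph{defined} by Figure~\ref{figegy}, so your half-plane description doubles as a precise definition of that region); it isolates exactly where strictness enters (the single inequality $\dist G{P_2}<\dist F{P_2}$, i.e., $G\neq F$); and the compactness of the arc of admissible normals legitimately upgrades the pointwise strict inequalities to interior membership, which is what $\loosin$ demands. What the paper's route buys in exchange is brevity and geometric transparency. The step you flag as delicate --- matching the half-plane intersection with the grey region --- is indeed the only real formalization burden, but it checks out: the two extreme normals give the boundary half-lines through the focus, the remaining admissible normals are active exactly along the front arc, and the zone between the focus and the front arc is cut off; so your argument is complete, and arguably more rigorous than the original two-line proof.
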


\begin{figure}[ht] 
\centerline
{\includegraphics[scale=1.0]{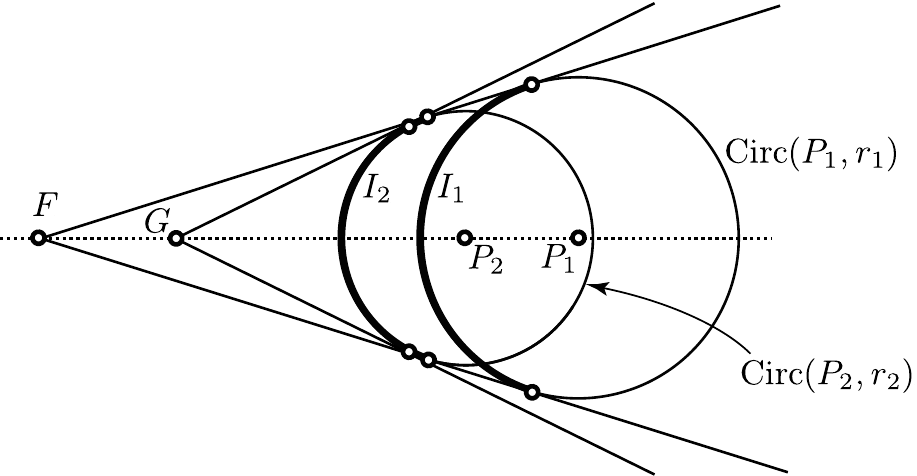}}
\caption{Illustration for Lemma~\ref{lemmaperspCs}
\label{figmbAT}}
\end{figure}%

\begin{proof} Clearly, $\circel {P_1}{r_1} =  \hthet F\lambda (\circel {P_2}{r_2})$ with 
$\lambda=r_1/r_2>1$. The external tangent lines of our circles  intersect at $F$.  Since  $\hthet F \lambda $ preserves tangency, it maps the circular arc  $I_2$ of $\circel {P_2}{r_2}$ between the tangent points onto the circular arc $I_1$ of $\circel {P_1}{r_1}$  between the images of these tangent points; see the thick arcs in  Figure~\ref{figmbAT}.
Hence, $I_2$ is strictly on the left of $I_1$ in  the figure, implying the lemma.
\end{proof}

\begin{lemma}\label{lemmaBlowUp} If $\lambda,\mu\in\real\setminus\set0$, $F, Q\in\preal$, and $R=\hthet F\lambda(Q)$, then,  composing maps from right to left, 
$\hthet R\mu \circ \hthet F\lambda = \hthet F\lambda\circ \hthet Q\mu$. 
\end{lemma}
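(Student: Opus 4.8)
The plan is to verify the identity by direct substitution, evaluating both composite maps at an arbitrary point $X\in\preal$ and checking that the two outputs coincide. The only inputs I need are the definition \eqref{eqtxthmThdF}, which gives $\hthet F\lambda(X)=(1-\lambda)F+\lambda X$ (and analogously for $\hthet R\mu$ and $\hthet Q\mu$), together with the hypothesis $R=\hthet F\lambda(Q)=(1-\lambda)F+\lambda Q$.

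First I would expand the left-hand side and feed in the formula for $R$:
\[
\hthet R\mu\bigl(\hthet F\lambda(X)\bigr)=(1-\mu)\bigl[(1-\lambda)F+\lambda Q\bigr]+\mu\bigl[(1-\lambda)F+\lambda X\bigr].
\]
Here the coefficient of $F$ is $(1-\mu)(1-\lambda)+\mu(1-\lambda)=(1-\lambda)$, so the two $F$-contributions collapse to a single $(1-\lambda)F$, and the expression simplifies to $(1-\lambda)F+\lambda(1-\mu)Q+\lambda\mu X$. Then I would expand the right-hand side,
\[
\hthet F\lambda\bigl(\hthet Q\mu(X)\bigr)=(1-\lambda)F+\lambda\bigl[(1-\mu)Q+\mu X\bigr]=(1-\lambda)F+\lambda(1-\mu)Q+\lambda\mu X,
\]
which is literally the same affine combination of $F$, $Q$, and $X$. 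Since $X$ was arbitrary, the two maps agree.

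There is no genuine obstacle beyond the bookkeeping; the single point to watch is the cancellation in the coefficient of $F$, and it is precisely this step that forces the hypothesis $R=\hthet F\lambda(Q)$ to enter. Conceptually, the identity rewrites as $\hthet R\mu=\hthet F\lambda\circ\hthet Q\mu\circ(\hthet F\lambda)^{-1}$, i.e.\ it says that conjugating a homothety by a similarity (in fact by any invertible affine map) produces a homothety with the same ratio whose center is the image of the original center. One could instead prove this coordinate-free statement once and then specialize it, but the one-line substitution above looks shortest and self-contained.
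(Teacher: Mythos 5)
Your proof is correct, but it follows a genuinely different route from the paper's. You verify the identity by brute force: expanding both composites at an arbitrary point $X$ via the definition \eqref{eqtxthmThdF} and checking that each reduces to the same affine combination $(1-\lambda)F+\lambda(1-\mu)Q+\lambda\mu X$ (your cancellation $(1-\mu)(1-\lambda)+\mu(1-\lambda)=1-\lambda$ is right, and it is indeed where the hypothesis $R=\hthet F\lambda(Q)$ enters). The paper instead gives a one-line conceptual argument: the conjugate $\hthet F\lambda\circ \hthet Q\mu \circ\hthet F\lambda^{-1}$ is a homothety of ratio $\mu$ that fixes $R$, hence must equal $\hthet R\mu$ --- which is precisely the reformulation you state in your closing remark but choose not to pursue. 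The trade-off: the paper's proof is shorter but silently relies on two standard facts (that conjugating a homothety by a homothety yields a homothety of the same ratio, and that a homothety is determined by its ratio and fixed point), whereas your substitution proves the lemma from the definition alone, with no imported geometric background. Either argument is fully adequate; yours buys self-containedness at the cost of a few lines of bookkeeping.
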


\begin{proof} 
$\hthet F\lambda\circ \hthet Q\mu \circ\hthet F\lambda^{-1}$ is clearly a homothety of ratio $\mu$ that fixes $R$. So this homothety is $\hthet R\mu$, which implies the lemma.
\end{proof}

\begin{lemma}\label{lemmaintrtgntrC}
If $\lambda>1$ and $C_0$ and $C_1$ are internally tangent circles with center points $\ctre C_0$ and $\ctre C_1$, respectively, 
then either one of $\hconc\lambda{C_0}(C_0)$ and $\hconc\lambda{C_1}(C_1)$ is in the interior of the other, or $C_0=C_1$.
\end{lemma}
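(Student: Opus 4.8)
The plan is to reduce the whole statement to a one-line comparison of two radii and the distance between two centers, because both homotheties occurring here are \emph{concentric} blow-ups. First I would fix the data: let $r_0,r_1>0$ be the radii of $C_0,C_1$, and recall that $\hconc\lambda{C_i}$ is the homothety with center $\ctre{C_i}$ and ratio $\lambda$. Since a homothety carries the center of a circle to the center of the image and multiplies the radius by the positive ratio $\lambda$, and since $\ctre{C_i}$ is a fixed point of $\hconc\lambda{C_i}$, we get $\hconc\lambda{C_i}(C_i)=\circel{\ctre{C_i}}{\lambda r_i}$. Thus each blown-up circle stays concentric with its original, and only the radii change.

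Next I would record the numerical meaning of internal tangency: two circles are internally tangent exactly when the distance between their centers equals the absolute value of the difference of their radii, that is, $\dist{\ctre{C_0}}{\ctre{C_1}}=|r_0-r_1|$. By symmetry I may assume $r_0\ge r_1$. If $r_0=r_1$, then $\dist{\ctre{C_0}}{\ctre{C_1}}=0$, whence $\ctre{C_0}=\ctre{C_1}$ and so $C_0=C_1$; this is exactly the exceptional alternative allowed in the lemma.

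It remains to handle $r_0>r_1$, where I would show that $\hconc\lambda{C_1}(C_1)=\circel{\ctre{C_1}}{\lambda r_1}$ lies in the open disk bounded by $\hconc\lambda{C_0}(C_0)=\circel{\ctre{C_0}}{\lambda r_0}$. Every point of $\circel{\ctre{C_1}}{\lambda r_1}$ is at distance at most $\dist{\ctre{C_0}}{\ctre{C_1}}+\lambda r_1$ from $\ctre{C_0}$, so it suffices to verify $\dist{\ctre{C_0}}{\ctre{C_1}}+\lambda r_1<\lambda r_0$. Substituting $\dist{\ctre{C_0}}{\ctre{C_1}}=r_0-r_1$ turns this into $r_0-r_1<\lambda(r_0-r_1)$, which holds because $r_0-r_1>0$ and $\lambda>1$. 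This yields the required strict inclusion and completes the argument.

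Honestly, there is no genuinely hard step here; once the setup is in place the lemma is an immediate computation. The only things that need care are invoking the correct (difference, not sum) characterization of internal tangency, peeling off the degenerate equal-radius situation as the $C_0=C_1$ case, and keeping the final inequality \emph{strict} so that the smaller blown-up circle lands in the interior rather than merely inside or on the boundary.
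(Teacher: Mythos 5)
Your proposal is correct and follows essentially the same route as the paper: both reduce internal tangency to $\dist{\ctre{C_0}}{\ctre{C_1}}=|r_0-r_1|$, dispose of the equal-radius case as $C_0=C_1$, and then verify the strict containment of the concentric blow-ups via the inequality $|r_0-r_1|+\lambda r_{\min}<\lambda r_{\max}$, which is exactly the paper's computation $\lambda r_1=\lambda(r_0+d)>\lambda r_0+d$ in slightly different notation. Your write-up is just a more explicit version of the paper's three-line argument, spelling out the containment criterion that the paper leaves implicit.
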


\begin{figure}[ht] 
\centerline
{\includegraphics[scale=1.0]{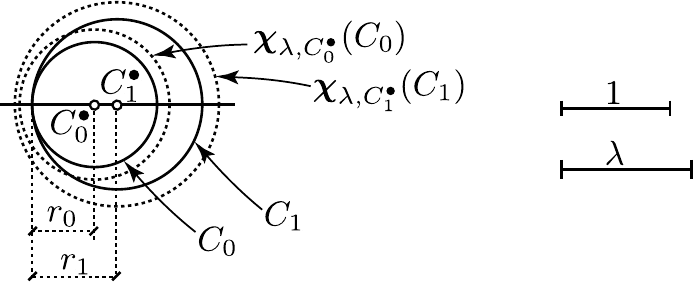}}
\caption{Illustration for Lemma~\ref{lemmaintrtgntrC} 
\label{fightDcdgT}}
\end{figure}%

\begin{proof} We can assume that the radii $r_0$ and $r_1$ are distinct, say,  $r_0<r_1$; see Figure~\ref{fightDcdgT}. The distance $d:=\dist{\ctre C_0}{\ctre C_1}$ is $r_1-r_0$.  Since $\lambda r_1=\lambda(r_0+d)>\lambda r_0+d$, $\hconc {\lambda}{C_{0}}(C_{0})$ is in the interior of $\hconc {\lambda}{C_{1}}(C_{1})$, as required.
\end{proof}

The following lemma resembles the 2-Carousel Rule in Adaricheva~\cite{adaricarousel}.

\begin{lemma}\label{lemmafpnTlsl}
Let $A_0$, $A_1$, and $A_2$ be non-collinear points in the plane. If  $B_0$ and $B_1$ are distinct \emph{internal} points of $\rhullo 2(\set{A_0,A_1,A_2})$, then there exist $j\in\set{0,1,2}$ and $k\in\set{0,1}$ such~that 
\[\set{B_{1-k}}\loosin \rhullo 2\bigl(\set{B_k}\cup (\set{A_0,A_1,A_2}\setminus\set{A_j})\bigr).
\]
\end{lemma}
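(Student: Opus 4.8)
The plan is to fix one of the two points as an apex and triangulate with it. For a fixed $k\in\set{0,1}$, I would observe that the three segments $[B_k,A_0]$, $[B_k,A_1]$, $[B_k,A_2]$ subdivide the triangle $\rhullo 2(\set{A_0,A_1,A_2})$ into the three smaller triangles $\rhullo 2(\set{B_k}\cup(\set{A_0,A_1,A_2}\setminus\set{A_j}))$, $j\in\set{0,1,2}$. These are genuine (non-degenerate) triangles, since $B_k$, being an internal point, lies on none of the three edge-lines of $\rhullo 2(\set{A_0,A_1,A_2})$. Their interiors tile the open triangle, and the only points of the open triangle missing from these interiors are those lying on one of the three dividing segments $[B_k,A_i]$. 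Hence, because $B_{1-k}$ is an internal point of $\rhullo 2(\set{A_0,A_1,A_2})$, the desired conclusion $\set{B_{1-k}}\loosin\rhullo 2(\set{B_k}\cup(\set{A_0,A_1,A_2}\setminus\set{A_j}))$ will hold for a suitable $j$ exactly when $B_{1-k}$ avoids all three segments $[B_k,A_i]$.

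Granting this reduction, I would argue by contradiction, assuming that \emph{both} $k=0$ and $k=1$ fail. Failure of $k=0$ means $B_1\in[B_0,A_i]$ for some $i$, and failure of $k=1$ means $B_0\in[B_1,A_{i'}]$ for some $i'$. Since $B_0\neq B_1$ and neither equals a vertex, the first relation places $A_i$ on the line through $B_0$ and $B_1$ with $B_1$ strictly between $B_0$ and $A_i$, while the second places $A_{i'}$ on the same line with $B_0$ strictly between $B_1$ and $A_{i'}$.

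The contradiction then drops out from elementary betweenness. If $i=i'$, the two betweenness relations on the common line are mutually exclusive. If $i\neq i'$, then both $A_i$ and $A_{i'}$ lie on the line through $B_0$ and $B_1$, and combining the two orderings forces the order $A_{i'},B_0,B_1,A_i$ along this line; consequently $B_0$ and $B_1$ lie on the segment $[A_{i'},A_i]$, an edge of the triangle, contradicting that they are \emph{internal} points. Either way a contradiction arises, so at least one value of $k$ works. The main obstacle — and the only place where both hypotheses on $B_0,B_1$ (distinctness and interiority) are genuinely needed — is precisely this boundary situation: one must rule out that each choice of apex is spoiled by the other point sitting on a dividing segment, and the freedom to swap the roles of $B_0$ and $B_1$ is exactly what renders the two failure modes incompatible.
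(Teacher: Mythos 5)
Your proof is correct and takes essentially the same route as the paper: the same triangulation of $\rhullo 2(\set{A_0,A_1,A_2})$ into the three sub-triangles with apex $B_k$ (the paper's \eqref{eqdTgrndSs}), together with the same trick of swapping the roles of $B_0$ and $B_1$ when the other point lands on a dividing segment. The only difference is in how that degenerate case is closed: the paper directly asserts that the swapped choice works (if $B_1$ lies in the interior of $[B_0,A_{j'}]$, then $B_0$ is an interior point of the triangle with apex $B_1$ opposite $A_{j'}$), whereas you rule out simultaneous failure for both values of $k$ via a one-dimensional betweenness contradiction; both justifications are sound.
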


\begin{proof} Since the triangle  $\rhullo 2(\set{A_0,A_1,A_2})$ is clearly of the form
\begin{equation}
\begin{aligned}
\rhullo 2(\set{B_0,A_1,A_2})
 \cup {} \rhullo 2(\set{A_0,B_0,A_2}) \cup \rhullo 2(\set{A_0,A_1,B_0}),
\end{aligned}
\label{eqdTgrndSs}
\end{equation}
$B_1$ belongs to at least one of the triangles in \eqref{eqdTgrndSs}. If one of these three  triangles, say, 
$\rhullo 2(\set{B_0,A_1,A_2})$, contains $B_1$ as an internal point, then we let $k=0$ and $j=0$. Otherwise, there is a $j'\in\set{0,1,2}$ such that  the line segment  $[B_0,A_{j'}]$ contains $B_1$ in its interior, and we can clearly let $k=1$ and $j=j'$. 
\end{proof}

\section{Proving Theorem~\ref{thmmain} with analytic tools}

\begin{proof}[Proof of Theorem~\ref{thmmain}]
If $A_0$, $A_1$, and $A_2$ are collinear points, then the circles are of radii 0 and \eqref{eqthmmain} holds trivially (even without $U_k$ on the right). Hence, in the rest of the proof, we assume that $A_0$, $A_1$, and $A_2$ are non-collinear points. We let
\[
\trngle:=\rhullo2{(\set{A_0,A_1,A_2})}.
\]
Let $P_i$ and $r_i$ denote the center and the radius of $U_i$ from the theorem. Note that
\begin{equation}
\text{$r_1=0$ implies \eqref{eqthmmain}, by \eqref{eqdTgrndSs} applied for $B_0\in U_0$ and $B_1=P_1$;}
\label{eqtxtfgzTnB}
\end{equation}
and similarly for $r_0=0$. Therefore, we will assume that none of $r_0$ and $r_1$ is zero. From now on, we prove the theorem by way of contradiction. That is, we assume that $U_0$ and $U_1$ are circles satisfying the assumptions of Theorem~\ref{thmmain}, $r_0r_1>0$, but \eqref{eqthmmain} fails. For $0\leq \xi\leq 1$ and $k\in\set{0,1}$, we denote $\circel {P_k}{\xi\cdot r_k}$ by $U_k(\xi)$. Let 
\begin{equation}
\begin{aligned}
\parbox{9.5cm}{$\goodset:=\{\eta\in[0,1]: (\forall \zeta\in[0,\eta])\,(\exists k\in\set{0,1})\,
(\exists j\in\set{0,1,2})$
 such that $
U_{1-k}(\zeta) \subseteq \rhullo 2\bigl( U_k(\zeta)\cup(\set{A_0,A_1, A_2}\setminus\set{A_j})\bigr)\}$.
}
\end{aligned}
\label{eqtxtksrGdStfB}
\end{equation}
In other words, $\goodset$ consists of those $\eta$ for which $U_0(\zeta)$, $U_1(\zeta)$, $A_0$, $A_1$, and $A_2$ satisfy the theorem for all $\zeta$ in the closed interval $[0,\eta]\subseteq [0,1]\subseteq \real$.  
For brevity, we let
\begin{equation}
\parbox{10cm}{$
W(j,k,\zeta):=\rhullo 2\bigl( U_k(\zeta)\cup(\set{A_0,A_1, A_2}\setminus\set{A_j})\bigr)$; 
then $\goodset:=\{\eta\in[0,1]: (\forall\zeta\in[0,\eta])\,(\exists k)\,(\exists j)\,(U_{1-k}(\zeta) \subseteq W(j,k,\zeta)\}$.
}
\label{eqdhmnCbmTm}
\end{equation}
By \eqref{eqtxtfgzTnB}, $0\in\goodset$.
Since $U_k(1)=U_k$, for $k\in\set{0,1}$, our indirect assumption gives that $1\notin \goodset$. Clearly, if $0\leq \eta_1\leq\eta_2\leq 1$ and $\eta_2$ belongs to $\goodset$, then so does $\eta_1$; in other words, $\goodset$ is an order ideal of the poset $\tuple{[0,1],\leq}$. 
From now on, 
\begin{equation}
\text{let $\xi$  denote the supremum of $\goodset$.}
\label{eqtxtXidef}
\end{equation} 
We are going to show that 
\begin{equation}
\text{$\xi\in\goodset$, whereby $\xi$ is actually the maximum of $\goodset$, and $\xi > 0$.}
\label{eqtxtxiisMax}
\end{equation}
Since $r_0,r_1>0$ and $P_0$ and $P_1$ are internal points of the triangle $\trngle$, it follows from Lemma~\ref{lemmafpnTlsl} that 
$\xi>0$.
In order to prove the rest of  \eqref{eqtxtxiisMax} by way of contradiction, suppose that $\xi\notin \goodset$. However, for each $i$ such that $\lceil 1/\xi\rceil <  i\in\mathbb N$, in short, for each \emph{sufficiently large} $i$, $\xi-1/i \in \goodset$. Hence, for each sufficiently large $i$, we can pick a $k_i\in \set{0,1}$ and a $j_i\in\set{0,1,2}$ such that  $U_{1-k_i}(\xi-1/i)\subseteq W(j_i,k_i,\xi-1/i)$; see \eqref{eqdhmnCbmTm}. Since $\set{0,1}\times\set{0,1,2}$ is finite, one of its pairs, $\pair k j$, occurs infinitely many times in the sequence of pairs $\pair{k_i}{j_i}$.  Thus, there exist a $k\in\set{0,1}$, a $j\in\set{0,1,2}$, and an infinite set $I\subseteq \mathbb N$ of sufficiently large integers $i$ such that 
\begin{equation}
\text{for all }i\in I,\text{ we have that }U_{1-k}(\xi-1/i)\subseteq W(j,k,\xi-1/i).
\label{eqmTzTmYyy}
\end{equation}
Since, for all $\eta$ and $\zeta$, $0\leq\eta\leq\zeta$ implies  $W(j,k,\eta)\subseteq W(j,k,\zeta)$, \eqref{eqmTzTmYyy} yields that 
\begin{equation}
\text{for all }i\in I,\text{ we have that }U_{1-k}(\xi-1/i)\subseteq W(j,k,\xi).
\label{eqmTzgZHYys}
\end{equation}
Next, let $X$ be an arbitrary point of the circle $U_{1-k}(\xi)$. Denote by $X_i$ the point
$\hthet{P_{1-k}}{(\xi-1/i)/\xi}(X)$; it belongs to $U_{1-k}(\xi-1/i)$. 
Less formally, we obtain $X_i$ as the intersection of $U_{1-k}(\xi-1/i)$ with the line segment connecting $X$ and $P_{1-k}$. 
As $i\in I$ tends to $\infty$, $X_i\to X$. 
Combining this with $X_i\in U_{1-k}(\xi-1/i)$ and \eqref{eqmTzgZHYys}, we obtain that $X$ is a \emph{limit point} (AKA accumulation point or cluster point)  of $W(j,k,\xi)$.  
The convex hull of a compact subset of $\real^n$ is  compact; see, for example, Proposition 5.2.5 in Papadopoulos~\cite{papadopoulos}. Hence, $W(j,k,\xi)$ from  \eqref{eqdhmnCbmTm} is a compact set; whereby it contains its limit point, $X$. Thus, since $X$ was an arbitrary point of $U_{1-k}(\xi)$, we conclude that 
$U_{1-k}(\xi)\subseteq W(j,k,\xi)$. By \eqref{eqdhmnCbmTm}, this proves \eqref{eqtxtxiisMax}.

\begin{figure}[ht] 
\centerline
{\includegraphics[scale=1.0]{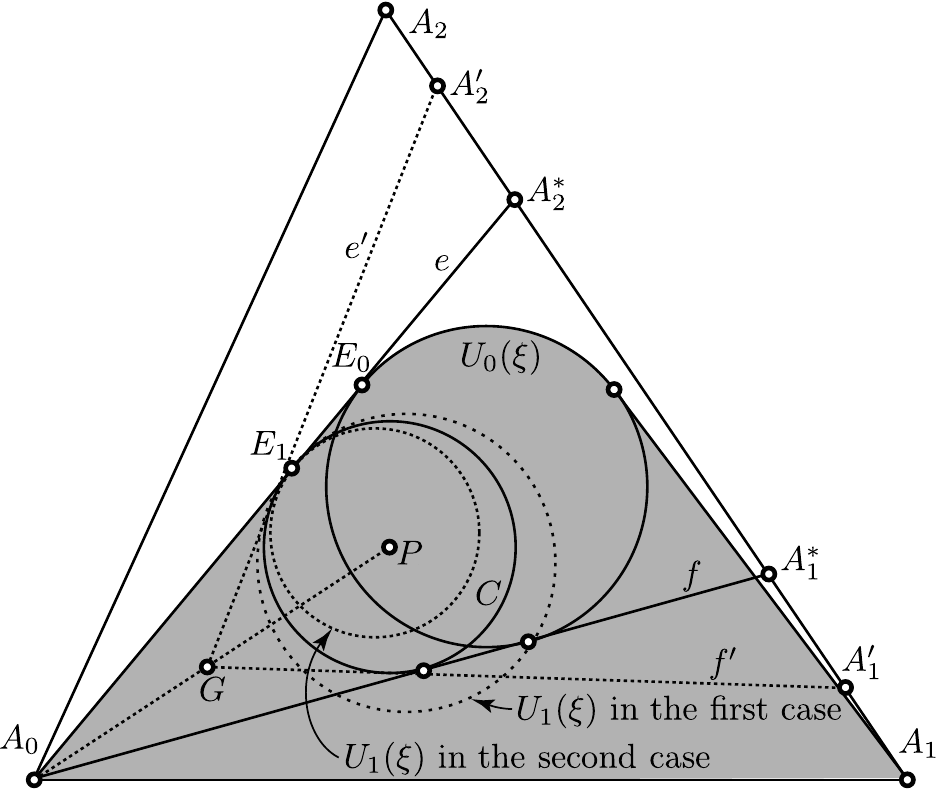}}
\caption{Illustration for \eqref{eqPxidfnt} 
\label{fightmThGdrrgtT}}
\end{figure}%

Since $\xi\in\goodset$, we can assume that the indices are chosen so that $U_1(\xi)$
is included in the grey-filled ``round-backed trapezoid'' 
\begin{equation}
\trap(\xi):= \rhullo2(\set{A_0,A_1}\cup U_0(\xi)); \text{ see Figure~\ref{fightmThGdrrgtT}.}
\label{eqPxidfnt}
\end{equation}
If $U_1(\xi)$ was included in the interior of $\trap(\xi)$, then there would be a (small) positive $\epsilon$ such that $U_1(\xi+\delta )\subseteq \trap(\xi)\subseteq \trap(\xi+\delta)$ for all $\delta\in(0,\epsilon]$  and $\xi+\epsilon$ would belong to $\goodset$, contradicting \eqref{eqtxtXidef}. Therefore, $U_1(\xi)$ is tangent to the boundary of $\trap(\xi)$. Since $\xi<1$ and $U_1(1)= U_1$ is still included in the triangle $T$, $U_1(\xi)$ cannot be tangent to the side $[A_0,A_1]$ of $T$.  If $U_1(\xi)$ was tangent to the  back arc of the  ``round-backed trapezoid''  $\trap(\xi)$ and so to $U_0(\xi)$, then one of $U_0=U_0(1)$ and  $U_1=U_1(1)$ would be in the interior of the other by Lemma~\ref{lemmaintrtgntrC}, and this would contradict the indirect assumption that \eqref{eqthmmain} fails. 
Hence $U_1(\xi)$ is tangent to one of the ``legs" of $\trap(\xi)$; this leg is an external tangent line $e$ of the circles $U_1(\xi)$ and $U_0(\xi)$ through, say, $A_0$; see Figure~\ref{fightmThGdrrgtT}. The corresponding touching points will be denoted by $E_1$ and $E_0$; see the figure. Let $\lambda:=\dist{A_0}{E_1}/\dist{A_0}{E_0}$; note that $0<\lambda<1$. By well-known properties of homotheties, the auxiliary circle 
\begin{equation}
C:=\hthet{A_0}\lambda(U_0(\xi)),\text{ with center }P:=\hthet{A_0}\lambda(P_0),
\label{eqtxtsddtGfhNmW}
\end{equation}
touches $e$ and, thus, $U_1(\xi)$ at $E_1$. Let $f$ denote the other tangent of $U_0(\xi)$ through $A_0$.  Let $A_1^\ast$ and $A_2^\ast$ be the intersection points of $f$ and $e$ with the line through $A_1$ and $A_2$, respectively. Since $U_0(1)=U_0$ is also included in $\trngle$ and $U_0(\xi)$ is a smaller circle concentric to $U_0$, both $A_1^\ast$ and $A_2^\ast$ are in the interior of the line segment $[A_1,A_2]$. By continuity, we can find a point $G$ in the interior of the line segment $[A_0,P]$ such that $G$ is outside $C$ and $G$ is so close to $A_0$ that the tangent lines $e'$ and $f'$ of $C$ through $G$ intersect the  line segments $[A_2^\ast,A_2]$ and $[A_1,A_1^\ast]$ at some of their \emph{internal} points, which we denote by $A_2'$ and $A_1'$, respectively. Since the ``round-backed trapezoid'' $\rhullo2(\set{A_1',A_2'}\cup C)$ is clearly the intersection of the round-edged angle $\reangle G C$ and one of the  half-planes determined by the line through $A_1'$ and $A_2'$, we obtain from Lemma~\ref{lemmaperspCs} that $U_0(\xi)\loosin \rhullo2(\set{A_1',A_2'}\cup C)$. 
Combining this with the obvious $\rhullo2(\set{A_1',A_2'}\cup C)\subseteq \rhullo2(\set{A_1,A_2}\cup C)$,  we obtain that $U_0(\xi)\loosin \rhullo2(\set{A_1,A_2}\cup C)$. 
Thus, we conclude that there exists a (small) positive $\epsilon$ in the interval $ (0, 1-\xi)$ such that 
\begin{equation}
U_0(\xi+\delta)\subseteq \rhullo2(\set{A_1,A_2}\cup C)\text{ for all }\delta\in (0,\epsilon].
\label{eqxhmTvbnW}
\end{equation}
Let $r$ be the radius of $C$. Depending on $r$, there are two cases. First, if $r_1>r$, then $C$ is inside $U_1(\xi)$ and, consequently, also in $U_1(\xi+\delta)$, whereby  \eqref{eqxhmTvbnW} leads to 
\[
U_0(\xi+\delta)\subseteq \rhullo2(\set{A_1,A_2}\cup C) \subseteq \rhullo2(\set{A_1,A_2}\cup U_1(\xi+\delta))
\]
for all $\delta\in(0,\epsilon]$. This gives that $\xi+\epsilon\in\goodset$, contradicting \eqref{eqtxtXidef}.

Second, let $r_1\leq r$. Now 
$U_1(\xi)$ coincides with or  is inside $C$.  By Lemma~\ref{lemmaintrtgntrC}, 
\begin{equation}
\text{for all $\mu>1$, $\hthet P\mu(U_1(\xi))$ coincides with or is inside $\hthet P\mu(C)$.}
\label{eqtxthgnBrsX}
\end{equation}
Clearly, $C\loosin T$, since so is $U_0(\xi)$.
Hence, we can choose a (small) positive $\delta$ such that  $\hthet{P_0}\mu(U_0(\xi))=U_0(\xi\mu)$ and $\hthet{P}\mu(C)$ are loosely included in $T$ for every $\mu\in[1,1+\delta]$. 
Furthermore, for every $\mu\in[1,1+\delta]$,
\begin{equation}
\begin{aligned}
\hthet P\mu(C)
&\eeqref{eqtxtsddtGfhNmW} 
\hthet P\mu\bigl(\hthet{A_0}\lambda(U_0(\xi))\bigr)
\cr 
&\overset{\textup{Lemma~\ref{lemmaBlowUp}}}=
\hthet{A_0}\lambda(\hthet {P_0}\mu(U_0(\xi)))
= \hthet{A_0}\lambda(U_0(\xi\mu)).
\end{aligned}
\label{eqdnbhRhjNq}
\end{equation}
Since $0<\lambda<1$, it follows  that
\begin{equation}
\hthet P\mu(C) \eeqref {eqdnbhRhjNq}
\hthet{A_0}\lambda(U_0(\xi\mu))\in\rhullo2(\set{A_0}\cup U_0(\xi\mu)),\text{ whence}
\label{eqhfWnPZwWt}
\end{equation}
\begin{align*}
U_1(\xi\mu)&=\hthet P\mu(U_1(\xi))
\overset{\eqref{eqtxthgnBrsX}}\subseteq
\rhullo2(\hthet P\mu(C))
\overset{\eqref{eqhfWnPZwWt}}\subseteq
\rhullo2\bigl(\set{A_0}\cup U_0(\xi\mu)\bigr).
\end{align*}
Since this holds for all $\mu\in[1,1+\delta]$, we conclude that $\xi(1+\delta)\in\goodset$.
This contradicts \eqref{eqtxtXidef}, 
completing the proof of Theorem~\ref{thmmain}. 
\end{proof}

\begin{figure}[ht] 
\centerline
{\includegraphics[scale=1.0]{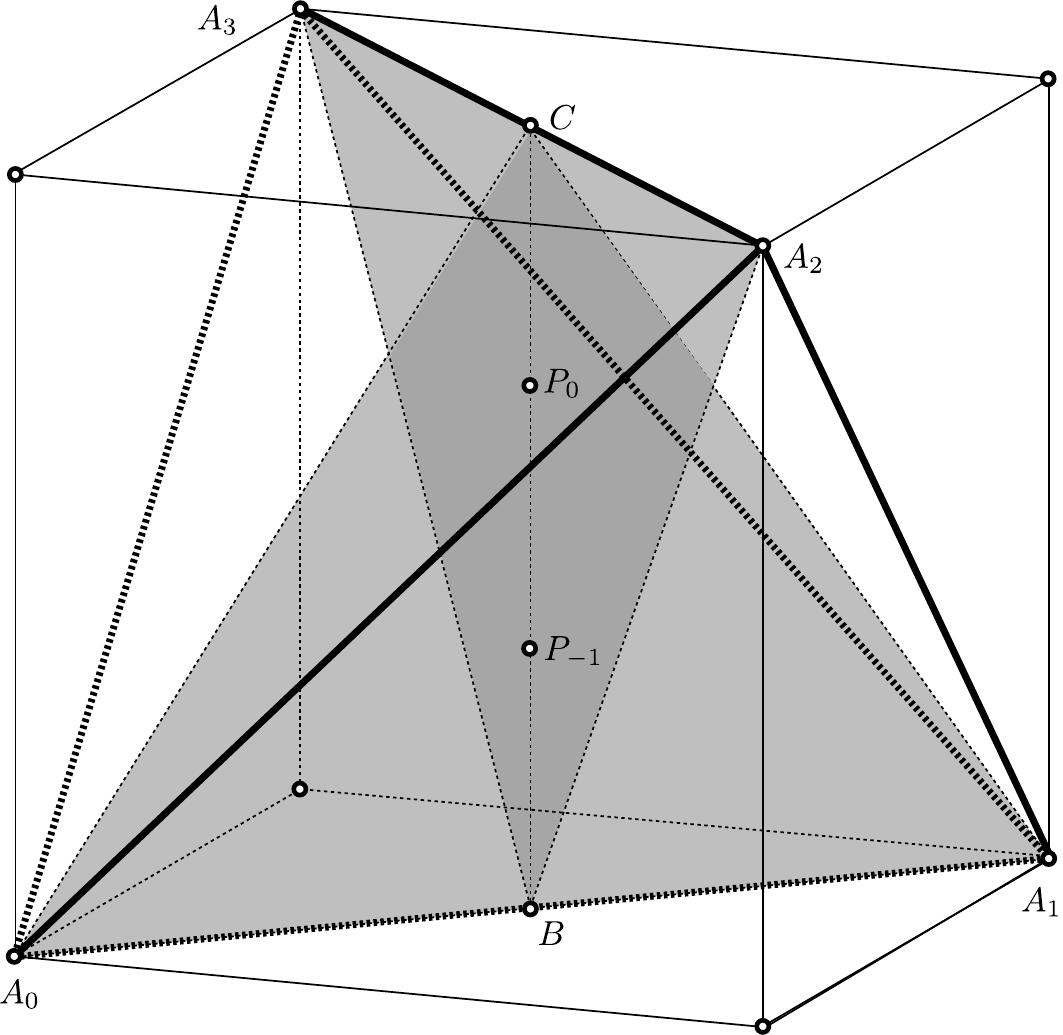}}
\caption{A regular tetrahedron in a cube}
\label{figksKbntDr}
\end{figure}%

\begin{figure}[ht] 
\centerline
{\includegraphics[scale=1.0]{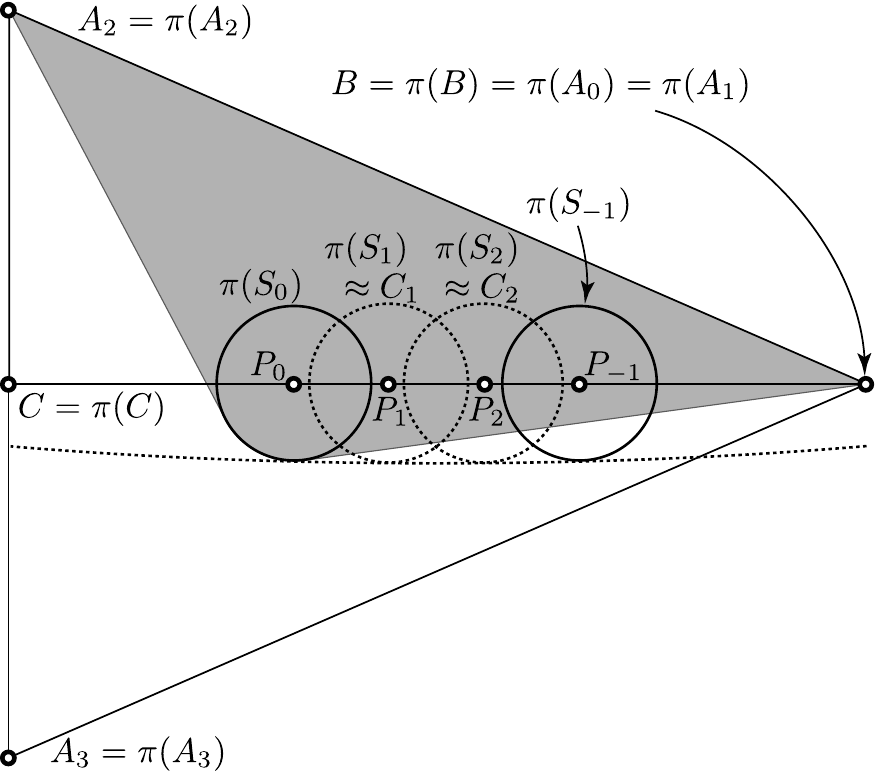}}
\caption{The $\pi$-images of our spheres}
\label{eqshpCrprCtn}
\end{figure}%

\section{Examples}\label{sectexampl}
This section explains why we have been unable to generalize Theorem~\ref{thmmain} for spheres so far. In our first example, one can change $\set{-1,0}$ and $-1-k$ to  $\set{0,1}$ and $1-k$, respectively; we have chosen  $\set{-1,0}$ and $-1-k$ for a technical reason.

\begin{example}  Let $A_0,\dots,A_3$ be the vertices of a regular tetrahedron as well as some vertices of a cube; see Figure~\ref{figksKbntDr}. Let $B$ and $C$ be the middle points of the line segments $[A_0,A_1]$ and $[A_2,A_3]$, respectively, and let $P_{-1}$ and $P_0$ divide $[B,C]$ into three equal parts as the figure shows.  Finally, let $S_{-1}$ and $S_0$ be spheres in the interior of the tetrahedron $\rhullo3(\set{A_0,\dots,A_3})$ with centers $P_{-1}$ and $P_0$ and of the same positive radius. Then, for all 
 $j\in \set{0,1,2,3}$ and $k\in\set{-1,0}$, 
\begin{equation}
S_{-1-k} \nsubseteq \rhullo 3\bigl( S_k\cup\bigcup(\set{A_0,A_1, A_2,A_3}\setminus\set{A_j})\bigr).
\label{eqsPrhflsUV}
\end{equation}
\end{example}

\begin{proof}By symmetry, it suffices to show 
\eqref{eqsPrhflsUV} only for $j=3$. First, let $k=0$.
We denote by $\pi$ the orthogonal projection of $\real^3$ to the plane containing $A_2$, $A_3$ and $B$. Suppose for a contradiction that $S_{-1}\subseteq\rhullo3(S_0\cup A_0\cup A_1\cup A_2)$;
this inclusion is preserved by $\pi$.
Since $\pi$ commutes with the formation of convex hulls and the disk $\pi(S_{-1})$ 
is not included in  
$\rhullo2(\pi(S_{0})\cup\pi(\set{A_0,A_1,A_2}))$, 
the grey-filled area in  Figure~\ref{eqshpCrprCtn}, which is a contradiction.
Second, if $k=-1$, then the argument is essentially the  same but the grey-filled area in Figure~\ref{eqshpCrprCtn} has to be changed.
\end{proof}

\begin{example}\label{examplsjdT}
For $t\in\set{3,4,5,\dots}$, add $t-2$ additional spheres to the previous example in the following way. Let $P_1,\dots, P_{t-2}$ divide the line segment $[P_0,P_{-1}]$ equidistantly; see Figure~\ref{eqshpCrprCtn} for $t=4$. This figure contains also a circular dotted arc with a sufficiently large radius; its center is far above the triangle. Besides the boundary circles of the disks $\pi(S_0)$ and $\pi(S_{-1})$ from the previous example, let $C_1$,\dots, $C_{t-2}$ be additional circles with centers $P_1,\dots,P_{t-2}$ such that all the (little) circles  are tangent to the dotted arc; this idea is taken from Cz\'edli~\cite[Figure 5]{czgcircles}. 
For $i\in\set{1,\dots, t-2}$, let $S_i$ be the sphere obtained from $C_i$ by  rotating it around the line through $B$ and $C$.
Note that $\pi(S_i)\approx C_i$ in Figure~\ref{eqshpCrprCtn} means that the circle $C_i$ is the boundary of the \emph{disk} $\pi(S_i)$.
Now,  for all 
 $j\in \set{0,1,2,3}$ and $k\in\set{-1,0,\dots,t-2}$, $S_{k}$ is \emph{not} a subset of
\begin{equation*}
\rhullo 2\bigl(
\bigcup(\set{S_{-1},S_0,\dots,S_{t-2}}\setminus\set{S_k})
\cup\bigcup(\set{A_0,A_1, A_2,A_3}\setminus\set{A_j})\bigr),
\end{equation*}
while all the  $S_k$ are still included in the tetrahedron $\rhullo3(\set{A_0,\dots,A_3})$. 
\end{example}

\begin{proof} Combine  the previous proof and Cz\'edli~\cite[Example 4.3]{czgcircles}.
\end{proof}

\end{document}